\newcommand{\ul}[1]{\underline{\vphantom{\mathstrut}#1}}
\newcommand{\ol}[1]{\overline{\vphantom{\mathstrut}#1}}
\newcommand{\bmm}[1]{\bm{\mathcal{#1}}}
\newcommand{\rr}{\mathbb{R}^n \oplus (\mathbb{R}^n)^*}
\renewcommand{\gg}{\mathfrak{g} \oplus \mathfrak{g}^*}
\renewcommand{\@biblabel}[1]{[#1]\hfill}
\numberwithin{equation}{section}
\newcommand{\upperset}[2]{%
    \underset{%
        \text{\raisebox{1ex}{\smash{\fontsize{5}{5}$#1$}}}
              }{#2}%
                         }
\newcommand{\R}{\mathbb{R}}
\newcommand{\N}{\mathbb{N}}
\newcommand{\ric}{\operatorname{Ric}}
\newcommand{\rc}{\operatorname{Rc}}
\newcommand{\D}{\mathcal{D}}
\newcommand{\Mu}{\bm{\mu}}
\newcommand{\mL}{\mathcal{L}}
\newcommand{\G}{\mathcal{G}}
\theoremstyle{plain}
\newtheorem{theorem}{Theorem}[section]
\newtheorem{lemma}[theorem]{Lemma}
\newtheorem{proposition}[theorem]{Proposition}
\theoremstyle{definition}
\newtheorem{definition}[theorem]{Definition}
\newtheorem{example}[theorem]{Example}
\newtheorem{remark}[theorem]{Remark}
\begin{document}

\title{Generalized Ricci flow on nilpotent Lie groups}
\author{Fabio Paradiso}
\address{Dipartimento di Matematica ``G.~Peano'' \\ Universit\`a degli Studi di Torino\\
Via Carlo Alberto 10\\
10123 Torino\\ Italy}
\email{fabio.paradiso@unito.it}
\date{}
\subjclass[2010]{53D18, 53C44, 53C30}
\keywords{Generalized geometry, Generalized Ricci flow, Nilpotent Lie groups}

\begin{abstract}
We define solitons for the generalized Ricci flow on an exact Courant algebroid, building on the definitions of \cite{Str17,Gar19,GS21}.
We then define a family of flows for left-invariant Dorfman brackets on an exact Courant algebroid over a simply connected nilpotent Lie group, generalizing the bracket flows for nilpotent Lie brackets in a way that might make this new family of flows useful for the study of generalized geometric flows, such as the generalized Ricci flow.
We provide explicit examples of both constructions on the Heisenberg group. We also discuss solutions to the generalized Ricci flow on the Heisenberg group.
\end{abstract}

\maketitle


\section{Introduction}
Generalized geometry, building on the work of N.\ Hitchin \cite{Hit03} and M.\ Gualtieri \cite{Gua04} and the structure of Courant algebroids, constitutes a rich mathematical environment.
The main idea behind it lies in the shift of point of view when studying structures on a differentiable manifold $M^n$, replacing the tangent bundle $TM$ with the \emph{generalized tangent bundle}
\[
\mathbb{T}M=TM \oplus T^*M.
\]
More explicitly, in the language of $G$-structures, one studies reductions of $\text{GL}(\mathbb{T}M)$, the $\text{GL}_{2n}$-principal bundle of frames of $\mathbb{T}M$.

A reduction to the orthogonal group $\text{O}(n,n)$ always exists, thanks to the nondegenerate symmetric bilinear form of neutral signature
\begin{equation} \label{scal}
\left<X+\xi,Y+\eta\right>=\frac{1}{2}\left(\eta(X)+\xi(Y)\right),
\end{equation}
so that one usually only considers structures which are reductions of $\text{O}(\mathbb{T}M)$, the $\text{O}(n,n)$-reduction of $\text{GL}(\mathbb{T}M)$ determined by this natural pairing.

In this spirit, for example, a \emph{generalized almost complex structure} on $M^{2m}$, defined by an orthogonal automorphism $\mathcal{J}$ of $\mathbb{T}M$, $\mathcal{J}^2=-\operatorname{Id}_{\mathbb{T}M}$, determines a $\text{U}(m,m)$-reduction of $\text{GL}(\mathbb{T}M)$. The integrability of such a structure is expressed through an involutivity condition with respect to a natural bracket operation, called the \emph{Dorfman bracket}:
\begin{equation} \label{brack}
\left[ X+\xi, Y+\eta\right]=[X,Y]+\mL_X \eta - \iota_Y d\xi.
\end{equation}

On the other hand, a \emph{generalized Riemannian metric} on $M^n$, defined by a symmetric (with respect to $\left<\cdot,\cdot\right>$) and involutive automorphism $\G$ of $\mathbb{T}M$, determines an $\text{O}(n) \times \text{O}(n)$-reduction of $\text{GL}(\mathbb{T}M)$.

More generally, one can consider a \emph{Courant algebroid} $E$ over $M$, namely a smooth vector bundle over $M$ endowed with a pairing $\left<\cdot,\cdot\right>$ and a bracket $\left[ \cdot,\cdot \right]$ satisfying certain properties so that $\mathbb{T}M$, endowed with \ref{scal} and \ref{brack}, is a special case. On the generic Courant algebroid one can then study reductions of $\text{GL}(E)$, such as generalized almost complex structures and generalized \mbox{(pseudo-)}Riemannian metrics.

In \cite{Str17,Gar19,GS21}, the authors introduced a flow of generalized (pseudo-)Riemannian metrics on a Courant algebroid $E$ over a smooth manifold $M$, generalizing the classical Ricci flow of R. Hamilton \cite{Ham82} and the $B$-field renormalization group flow of Type II string theory (see \cite{Pol98}). The \emph{generalized Ricci flow}, as we shall refer to this flow from now on, is actually a flow for a pair of families of generalized (pseudo-)Riemannian metrics $\G \in \text{Aut}(E)$ and divergence operators $\text{div} \colon \Gamma(E) \to C^{\infty}(M)$, the latter of which are required in order to ``gauge-fix'' curvature operators associated with a generalized  (pseudo-)Riemannian metric.

The paper is organized as follows: Section \ref{sec:preliminaries} is devoted to a review of the setting of generalized geometry -- including the notions of Courant algebroid, generalized curvature tensors and the definition of generalized Ricci flow -- and of the algebraic framework of nilpotent Lie groups.

In Section \ref{sec:soliton} we introduce the notion of \emph{generalized Ricci soliton}, which derives from the study of self-similar (in a suitable sense) solutions to the generalized Ricci flow on exact Courant algebroids. This condition generalizes the Ricci soliton condition $\text{Rc}_g=\lambda g + \mL_X g,$ where $\text{Rc}_g$ denotes the Ricci tensor of $g$, $\lambda \in \R$ and $\mL_X g$ denotes the Lie derivative of $g$ with respect to a vector field $X$. We show that, when working on a Lie group and considering left-invariant structures, this condition descends to an algebraic condition on the Lie algebra of the group.

Borrowing from the ideas of J. Lauret, in Section \ref{sec:bracket} we consider left-invariant Dorfman brackets on simply connected nilpotent Lie groups, describing them as elements of an algebraic subset of the vector space of skew-symmetric bilinear forms on \mbox{$\rr$}, for the suitable $n$. We then define a family of flows of such structures, showing that they generalize the constructions known in literature as \emph{bracket flows}, which have been extensively used to rephrase geometric flows on (nilpotent) Lie groups (see for example \cite{Lau11}). This justifies our definition of \emph{generalized bracket flows}.

In Section \ref{examples_heis}, we perform explicit computations of generalized Ricci solitons and exhibit an example of generalized bracket flow on the three-dimensional Heisenberg group.

In Section \ref{genricciflow_heis}, we study solutions of the generalized Ricci flow on the Heisenberg group, highlighting the differences with the classical Ricci flow.
\medskip

\noindent
{\bf Acknowledgments.}
This paper is an adaptation of the author's master's thesis, written under the supervision of Anna Fino. To her the author wishes to express his most sincere gratitude. The author also wishes to thank Mario Garcia-Fernandez for useful comments and Jeffrey Streets for pointing out reference \cite{Str17}. He also thanks David Krusche for noting an imprecision in formula \ref{Rc_gen}, and an anonymous referee for useful comments which helped improve the presentation of the paper. The author was supported by GNSAGA of INdAM.

\section{Preliminaries}\label{sec:preliminaries}

\subsection{Courant algebroids}

Let $V$ be a real vector space of dimension $n$.
We start by recalling a few facts about the algebra of the vector space $V \oplus V^*$; for more details, see \cite{Gua04}.

$V \oplus V^*$ can be endowed with a natural symmetric bilinear form of neutral signature
\[
\left<X+\xi,Y+\eta\right>=\frac{1}{2}(\eta(X)+\xi(Y))
\]
and with a canonical orientation provided by the preimage of $1 \in \R$ in the isomorphism $\varphi \colon \Lambda^{2n}(V \oplus V^*) = \Lambda^nV \otimes \Lambda^nV^*  \to \R$, sending $(X_1 \wedge \ldots \wedge X_n) \otimes (\xi_1 \wedge \ldots \wedge \xi_n)$ into $\text{det} (\xi_i(X_j))_{ij}$.

Consider the Lie group $\text{SO}(V \oplus V^*) \cong \text{SO}(n,n)$ of automorphisms of $V \oplus V^*$ preserving the pairing $\left< \cdot,\cdot \right>$ and the canonical orientation. Its Lie algebra $\mathfrak{so}(V \oplus V^*) \cong \mathfrak{so}(n,n)$ consists of  endomorphisms $T \in \mathfrak{gl}(V \oplus V^*)$ which are skew-symmetric with respect to $\left<\cdot,\cdot\right>$, namely
\begin{equation} \label{skewsymm}
\left<Tz_1,z_2\right> + \left<z_1,Tz_2\right>=0
\end{equation}
for all $z_1,z_2 \in V \oplus V^*$. Seeing $T$ as a block matrix, \ref{skewsymm} dictates $T$ to be of the form
\[
T=\begin{pmatrix} \phi & \beta \\ B & -\phi^* \end{pmatrix},
\]
for some $\phi \in \mathfrak{gl}(V)$, $B \in \Lambda^2V^*$ and $\beta \in \Lambda^2V$, recovering the fact that
\[
\mathfrak{so}(V \oplus V^*) \cong \Lambda^2(V \oplus V^*)^* \cong \Lambda^2 V^* \oplus (V^* \otimes V) \oplus \Lambda^2 V,
\]
where the former isomorphism is given by $T \mapsto \left<T \cdot,\cdot\right>$.

Via the exponential map $\text{exp} \colon \mathfrak{so}(V \oplus V^*) \to \text{SO}(V \oplus V^*)$, we obtain distinguished elements of $\text{SO}(V \oplus V^*)$:
\begin{itemize}
\item $e^B = \begin{pmatrix} \operatorname{Id} & 0 \\ B & \operatorname{Id} \end{pmatrix} \colon X + \xi \mapsto X+\xi+\iota_X B$, called \emph{$B$-field transformations},
\item $e^\phi= \begin{pmatrix} e^\phi & 0 \\ 0 & (e^{-\phi})^* \end{pmatrix}$, which extends to an embedding of the whole $\text{GL}(V)$ into \mbox{$\text{SO}(V \oplus V^*)$}, sending $A \in \text{GL}(V)$ into
\[
\bm{A}=\begin{pmatrix} A & 0 \\0 & (A^*)^{-1} \end{pmatrix}.
\]
In the case $V=\R^n$, the image of this embedding will be denoted by $\textbf{GL}_n$.
\end{itemize}

Let $M$ be an oriented smooth manifold of positive dimension $n$.
\begin{definition}
A \emph{Courant algebroid} over $M$ is a smooth vector bundle $E \rightarrow M$ equipped with:
\begin{itemize}
\item $\left<\cdot,\cdot\right>$, a fiberwise nondegenerate bilinear form, which allows to identify $E$ and its dual $E^*$, viewing $z \in E$ as $\left<z,\cdot\right> \in E^*$,
\item $[\cdot,\cdot]$, a bilinear operator on $\Gamma(E)$,
\item a bundle homomorphism $\pi : E \rightarrow TM$, called the \emph{anchor},
\end{itemize}
which satisfy the following properties for all $z,z_i \in \Gamma(E)$, $i=1,2,3$, $f \in C^\infty(M)$:
\begin{enumerate}
\item $[z_1,[z_2,z_3]]=[[z_1,z_2],z_3]+[z_2,[z_1,z_3]]$ (Jacobi identity),
\item $\pi[z_1,z_2]=[\pi(z_1),\pi(z_2)]$,
\item $[z_1,fz_2]=f[z_1,z_2]+\pi(z_1)(f)z_2$,
\item $[z,z]=\frac{1}{2}\D\left<z,z\right>$,
\item $\pi(z_1)\left<z_2,z_3\right>=\left<[z_1,z_2],z_3\right>+\left<z_2,[z_1,z_3]\right>$,
\end{enumerate}
where $\D \coloneqq \pi^*d:C^{\infty}(M) \rightarrow \Gamma(E)$.
\end{definition}

\begin{definition}
A Courant algebroid $E$ over $M$ is \emph{exact} if the short sequence
\begin{equation} \label{exact}
\begin{tikzcd}
0\arrow{r} & T^*M \arrow{r}{\pi^*} & E\arrow{r}{\pi} & TM\arrow{r} & 0
\end{tikzcd}
\end{equation}
is exact, namely if the anchor map is surjective and its kernel is exactly the image of $\pi^*$.
\end{definition}
By the classification of P. \v{S}evera \cite{Sev98}, isomorphism classes of exact Courant algebroids over $M$ are in bijection with the elements of the third de Rham cohomology group of $M$, $H^3(M)$: an exact Courant algebroid with \emph{\v{S}evera class} $[H] \in H^3(M)$ is isomorphic to the Courant algebroid $E_H=\mathbb{T}M \coloneqq TM \oplus T^*M$ over $M$ with pairing of neutral signature
\begin{equation} \label{innerstd}
\left<X+\xi,Y+\eta\right>=\frac{1}{2}(\eta(X)+\xi(Y)),
\end{equation}
and \emph{(twisted) Dorfman bracket}
\begin{equation} \label{bracketH}
[X+\xi,Y+\eta]=[X,Y]+\mL_X \eta - \iota_Y d\xi + \iota_Y \iota_X H,
\end{equation}
for any $H \in [H]$. Such isomorphisms are obtained explicitly via the choice of an isotropic splitting to \ref{exact}, while $B$-field transformations, $B \in \Gamma(\Lambda^2T^*M)$, provide explicit isomorphisms
\[
e^B \colon E_H \to E_{H-dB}.
\]

In what follows, let $E$ be a Courant algebroid over $M$, with $\text{rk}(E)=2n$ and pairing $\left< \cdot,\cdot \right>$ of neutral signature.
\begin{definition}
A \emph{generalized Riemannian metric} on $E$ is an $\text{O}(n) \times \text{O}(n)\,$-reduction of $\text{O}(E)$, the $\text{O}(n,n)$-principal subbundle of orthonormal frames of $E$ with respect to the pairing $\left<\cdot,\cdot\right>$. Explicitly, it is equivalently determined by
\begin{itemize}
\item a subbundle $E_+$ of $E$, $\text{rk}(E_+)=n$, on which $\left<\cdot,\cdot\right>$ is positive-definite,
\item an automorphism $\G$ of $E$ which is involutive, namely $\G^2=\operatorname{Id}_E$, and such that $\left< \G \cdot,\cdot\right>$ is a positive-definite metric on $E$.
\end{itemize}
Given $E_+$, denoting by $E_-$ its orthogonal complement with respect to $\left<\cdot,\cdot\right>$, $\G$ is defined by $\G|_{E_\pm}=\pm \operatorname{Id}_{E_\pm}$. $E_\pm$ can then be recovered as the $\pm 1$-eigenbundles of $\G$.
Given $z \in E$, we shall denote by $z^\pm$ its orthogonal projections along $E_\pm$.
\end{definition}

\begin{example}
Every generalized Riemannian metric on the exact Courant algebroid $E_H$ is of the form
\begin{equation*}
\G=e^B \begin{pmatrix} 0 & g^{-1} \\ g & 0 \end{pmatrix} e^{-B},
\end{equation*}
for some $g$ Riemannian metric and $B$ $2$-form on $M$ (see \cite[Section 6.2]{Gua04}). The corresponding $E_\pm$ are
\[
E_\pm=e^B \{X\pm g(X),\,X \in TM \},
\]
where by $g(X)$ we mean $g(X,\cdot)$.
Notice that $\G$ is of the form
\begin{equation*}
\begin{pmatrix} 0 & g^{-1} \\ g & 0 \end{pmatrix}
\end{equation*}
in the splitting $E_{H+dB}$.
\end{example}

\subsection{Generalized curvature}
We now recall the definition of generalized connection on a Courant algebroid $E$, showing how these objects can be used to associate curvature operators with a generalized Riemannian metric $\G$.
Unlike the Riemannian case, where the uniqueness of the Levi-Civita connection allows to single out canonical curvature operators for a given Riemannian metric, in the generalized setting there are plenty of torsion-free generalized connections compatible with a generalized Riemannian metric $\G$, and these may define different curvature operators. To gauge-fix them, one needs to additionally fix a divergence operator. For further details, we refer the reader to \cite{Gar19} and \cite{CD19}.
\begin{definition}
A \emph{generalized connection} on a Courant algebroid $E$ is a linear map
\[
D \colon \Gamma(E) \to \Gamma(E^* \otimes E)
\]
which satisfies a Leibniz rule and a compatibility condition with $\left<\cdot,\cdot\right>$:
\begin{gather*}
D(fz)=f(D z) + \D f \otimes z,\\
\left<\D \left<z_1,z_2\right>,\cdot\right>=\left<D_\cdot z_1,z_2\right> + \left<z_1,D_\cdot z_2\right>,
\end{gather*}
for all $z,z_1,z_2 \in \Gamma(E)$, $f \in C^{\infty}(M)$, where $D_{z_1} z_2 \coloneqq D z_2(z_1)$.
\end{definition}

Given a generalized Riemannian metric $\G$, a generalized connection $D$ is \emph{compatible with $\G$} if $D\G=0$, where here $D$ denotes the induced $E$-connection on the tensor bundle $E^* \otimes E \cong \text{End}(E)$. Equivalently, $D$ is compatible with $\G$ if $D(\Gamma(E_\pm)) \subset \Gamma(E^* \otimes E_\pm)$.

The \emph{torsion} $T_D \in \Gamma(\Lambda^2E^* \otimes E)$ of a generalized connection $D$ on E is defined by
\[
T_D(z_1,z_2)=D_{z_1}z_2 - D_{z_2}z_1 - [z_1,z_2] + (D z_1)^*z_2.
\]
If $T_D=0$, the generalized connection $D$ is said to be \emph{torsion-free}.

Given a generalized connection $D$ on $E$ which is compatible with a generalized Riemannian metric $\G$, one can define curvature operators
\[
\text{R}_D^\pm \in \Gamma(E_{\pm}^* \otimes E_{\mp}^* \otimes \mathfrak{o}(E_\pm)),
\]
where $\mathfrak{o}(E_\pm)=\left<\cdot,\cdot\right>^{-1} \Lambda^2 E_\pm^*$ denotes the Lie algebra of skew-symmetric endomorphisms of $E_\pm$ with respect to $\left<\cdot,\cdot\right>$, by
\[
\text{R}_D^{\pm}\left(z_1^\pm,z_2^\mp\right)z_3^\pm=D_{z_1^\pm}D_{z_2^\mp}z_3^\pm - D_{z_2^\mp}D_{z_1^\pm}z_3^\pm - D_{\left[z_1^\pm,z_2^\mp\right]}z_3^\pm.
\]
One then has associated \emph{Ricci tensors}
\begin{align*}
\text{Rc}^\pm_D &\in \Gamma(E_\mp^* \otimes E_\pm^*),\\
\text{Ric}^\pm_D &\in \Gamma(E_\mp^* \otimes E_\pm^{}),
\end{align*}
defined by
\begin{gather*}
\text{Rc}_D^\pm(z_1^\mp,z_2^\pm) = \text{tr}\left(z^\pm \mapsto \text{R}^\pm_D(z^\pm,z_1^\mp)z_2^\pm\right),\\
\text{Ric}_D^\pm=\left<\cdot,\cdot\right>^{-1}\text{Rc}_D^\pm.
\end{gather*}

\begin{definition}
A \emph{divergence operator} on $E$ is a first order differential operator $\text{div} \colon \Gamma(E) \to C^{\infty}(M)$ satisfying the Leibniz rule
\[
\text{div}(fz)=\pi(z)f + f\,\text{div}(z),
\]
$f \in C^{\infty}(M)$, $z \in \Gamma(E)$.
Given a generalized connection $D$ on $E$, one may define the associated divergence operator
\[
\text{div}_D(z)=\text{tr}(Dz).
\]
\end{definition}

\begin{remark}
Divergence operators on $E$ form an affine space over the vector space $\Gamma(E) \cong \Gamma(E^*)$. Fixing a divergence operator $\text{div}_0$, any other $\text{div}$ is of the form
\[
\text{div}=\text{div}_0 - \left<z,\cdot\right>
\]
for some $z \in \Gamma(E)$.
\end{remark}

\begin{proposition}{\normalfont \cite[Proposition 4.4]{Gar19}}
Let $D_i$, $i=1,2$, be torsion-free generalized connections on $E$ compatible with a given generalized Riemannian metric $\G$. Suppose $\text{\normalfont div}_{D_1}=\text{\normalfont div}_{D_2}$. Then, $\text{\normalfont Rc}^\pm_{D_1}=\text{\normalfont Rc}^\pm_{D_2}$.
\end{proposition}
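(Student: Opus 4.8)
The plan is to analyze the difference tensor $A \coloneqq D_1 - D_2$. Subtracting the two Leibniz rules shows that $A$ is $C^\infty(M)$-linear in each argument, hence a genuine tensor $A \in \Gamma(E^* \otimes \operatorname{End}(E))$, $z \mapsto A_z$, with $A_{z_1}z_2 \coloneqq (Az_2)(z_1)$. First I would record the four constraints it inherits from the hypotheses: compatibility of each $D_i$ with $\langle\cdot,\cdot\rangle$ makes every $A_z$ skew-symmetric for $\langle\cdot,\cdot\rangle$; compatibility with $\G$ makes each $A_z$ preserve $E_\pm$; subtracting the vanishing torsions $T_{D_1}=T_{D_2}=0$ yields the cyclic identity $\langle A_{z_1}z_2, z_3\rangle - \langle A_{z_2}z_1, z_3\rangle + \langle A_{z_3}z_1, z_2\rangle = 0$; and $\operatorname{div}_{D_1}=\operatorname{div}_{D_2}$ gives $\operatorname{tr}(z \mapsto A_z w)=0$ for every $w$.

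The crucial algebraic step is to feed mixed inputs into the cyclic identity. Taking $z_1 \in E_+$, $z_2 \in E_-$, $z_3 \in E_+$, two of the three terms vanish because each $A_z$ preserves $E_\pm$ and $E_+ \perp E_-$, so the surviving term forces $A_{z^-}|_{E_+}=0$; the mirror choice gives $A_{z^+}|_{E_-}=0$. Thus only the ``pure'' blocks $A_{z^+}|_{E_+}$ and $A_{z^-}|_{E_-}$ can be nonzero. I expect this to be the key observation, since it trivializes most of the curvature computation: in particular $A_{z^-}$ annihilates every section of $E_+$.

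Next I would expand $\operatorname{R}^+_{D_1} - \operatorname{R}^+_{D_2}$ with $D_1 = D_2 + A$. Because $A_{z^-}$ kills $E_+$, the connection $D_1$ agrees with $D_2$ when differentiating $E_+$-sections along $E_-$-directions, and every quadratic-in-$A$ term and bracket contribution drops out; using that $D_2$ is torsion-free (so $(D_2 z^+)^* z^- = 0$ and $[z^+,z^-]^+ = -(D_2)_{z^-}z^+$) collapses the remaining terms to $(\operatorname{R}^+_{D_1} - \operatorname{R}^+_{D_2})(z_1^+,z_2^-)z_3^+ = -((D_2)_{z_2^-}A)_{z_1^+}z_3^+$. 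Tracing over an orthonormal frame $\{e_a\}$ of $E_+$ then gives
\[
(\operatorname{Rc}^+_{D_1} - \operatorname{Rc}^+_{D_2})(z_1^-, z_2^+) = -\sum_a \langle ((D_2)_{z_1^-} A)_{e_a} z_2^+, e_a \rangle.
\]

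Finally I would identify the right-hand side as a contraction of the tensor $(D_2)_{z_1^-}A$. Since $D_2$ is compatible with both $\langle\cdot,\cdot\rangle$ and $\G$, the full trace commutes with $(D_2)_{z_1^-}$ and the $E_-$-directions contribute nothing, so the sum $\sum_a \langle ((D_2)_{z_1^-}A)_{e_a} z_2^+, e_a\rangle$ equals $\pi(z_1^-)\operatorname{tr}(z \mapsto A_z z_2^+) - \operatorname{tr}(z \mapsto A_z (D_2)_{z_1^-} z_2^+)$, both terms of which vanish by the divergence hypothesis; hence $\operatorname{Rc}^+_{D_1} = \operatorname{Rc}^+_{D_2}$, and the $\operatorname{Rc}^-$ case is identical with $E_\pm$ interchanged. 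The main obstacle is the bookkeeping in the curvature expansion: one must verify carefully that the equal-divergence hypothesis is needed only for this last surviving trace, while every other term is eliminated purely by the block-vanishing of $A$ and the torsion-freeness of $D_2$.
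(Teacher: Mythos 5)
Your argument is correct and is essentially the proof of \cite[Proposition 4.4]{Gar19} itself (the paper under review only cites that result without reproving it): one shows the difference tensor $A=D_1-D_2$ has vanishing mixed-type blocks via the torsion identity together with metric and $\G$-compatibility, reduces the curvature difference to $-\bigl((D_2)_{z^-}A\bigr)$, and kills its trace using that the connection commutes with the natural contraction and that $\operatorname{div}_{D_1}=\operatorname{div}_{D_2}$ forces that contraction of $A$ to vanish. The only cosmetic remark is that the trace--contraction commutes with $D_2$ for any connection (no metric compatibility needed there); compatibility with $\G$ is what you actually use to identify the full trace with the $E_+$-trace.
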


Moreover, for any divergence operator $\text{div}$ and generalized Riemannian metric $\G$ on $E$, the set of torsion-free generalized connections $D$ on $E$ which are compatible with $\G$ and such that $\text{div}_D =\text{div}$ is nonempty (see \cite[Section 3.2]{Gar19}). Thus, Ricci tensors $\text{Rc}_{\G,\text{div}}^\pm$ are well-defined as equal to $\text{Rc}_D^\pm$ for any such generalized connection $D$.

\begin{example}
On the exact Courant algebroid $E_H$ over $M$, let
\[
\G=\begin{pmatrix} 0 & g^{-1} \\ g & 0 \end{pmatrix}
\]
and
\[
\text{div}_{g,z}(X+\xi)=dV_g^{-1}\mL_X dV_g^{} - \left<z,X+\xi\right>,
\]
where $g$ is a Riemannian metric, $dV_g$ its associated Riemannian volume form and $z \in \Gamma(E_H)$. Then, via the isomorphism $\pi_+ = \pi|_{E_+} \colon E_+ \to TM$, the Ricci tensor $\text{Rc}^+$ of $(\G,\text{div}_{g,\theta})$ is given by
\begin{equation} \label{Rc_gen}
\text{Rc}^+_{\G,\text{div}_{g,z}}=\text{Rc}_g - \frac{1}{4} H \upperset{g}{\circ} H - \frac{1}{2} d^*_g H + \frac{1}{2} \nabla^{+}_{g,H} \theta,
\end{equation}
where
\begin{itemize}
\item $\text{\normalfont Rc}_g \in \Gamma(S^2_+T^*M)$ is the Ricci tensor associated with $g$,
\item $H \upperset{g}{\circ} H \in \Gamma(S^2T^*M)$,
\[
H \upperset{g}{\circ} H (X,Y) = g(\iota_X H,\iota_Y H),
\]
\item $d^*_g=-\upperset{g}{*} d \,\upperset{g}{*} \colon \Gamma(\Lambda^3T^*M) \to \Gamma(\Lambda^2T^*M)$ is the Hodge codifferential associated with the metric $g$ and the fixed orientation, $\upperset{g}{*}$ being the Hodge star operator,
\item $\nabla^+_{g,H} = \nabla^g + \frac{1}{2} g^{-1}H$ is the \emph{Bismut connection} with torsion $H$, $\nabla^g$ denoting the Levi-Civita connection of $g$,
\item $\theta \in \Gamma(T^*M)$ is given by $\theta=2g(\pi z^+,\cdot)=g(X,\cdot)+\xi$, if $z=X+\xi$.
\end{itemize}
See \cite[Proposition 3.30]{GS21} for the proof of this fact (cf. also \cite{Kru}).
\end{example}

\subsection{Generalized Ricci flow}

We now review the framework of the generalized Ricci flow first introduced in \cite{Str17,Gar19} and later described and studied in \cite{GS21} by the two authors. Consider a smooth family of generalized Riemannian metrics $(\G(t))_{t \in I}$ on $E$, $I \subset \R$, with respective eigenbundles ${E_\pm}|_t$. Its variation $\dot{\G}(t)$ exchanges the eigenbundles ${E_{\pm}}|_t$, so that $\dot{\G}(t)=\dot{\G}^+(t) + \dot{\G}^-(t)$, with
\[
\dot{\G}^\pm(t) \in \Gamma({E_{\mp}}|_t^* \otimes {E_{\pm}}|_t^{}).
\]

\begin{definition}{\cite[Definition 5.1]{Gar19}}
A smooth pair of families $(\G(t),\text{div}(t))_{t \in I}$ of generalized Riemannian metrics and divergence operators on $E$ is a solution to the \emph{generalized Ricci flow} if it satisfies
\[
\dot{\G}^+(t)=-2\,\text{Ric}_t^+,
\]
for all $t$ in the interior of $I$, where $\text{Ric}_t^+ \coloneqq \text{Ric}^+_{\G(t),\text{div}(t)}$.
\end{definition}

On an exact Courant algebroid, the system may be written as follows:

\begin{proposition}{\normalfont \cite[Example 5.4]{Gar19}} \label{prop:GRFex}
Let $E$ be an exact Courant algebroid on an oriented smooth manifold $M$, with \v{S}evera class $[H] \in H^3(M)$. Fix an isotropic splitting $E_{H}=\mathbb{T}M$ for $E$ and consider the pair of smooth families $(\G(t),\text{\normalfont div}(t))_{t \in I}$ defined by:
\begin{gather*}
\G(t)=e^{B(t)} \begin{pmatrix} 0 & g(t)^{-1} \\ g(t) & 0  \end{pmatrix} e^{-B(t)}, \\
\text{{\normalfont div}}(t)=\text{{\normalfont div}}_{g(t),z(t)},
\end{gather*}
where $(g(t)) \subset \Gamma(S^2_+T^*M)$, $(B(t)) \subset \Gamma(\Lambda^2T^*M)$ and $(z(t)) \subset \Gamma(E)$.

Then $(\G(t),\text{\normalfont div}(t))_{t \in I}$ is a solution of the generalized Ricci flow on $E$ if and only if the families $(g(t),B(t),\theta(t))_{t \in I}$, with $\theta(t)=2g(\pi z(t)^+,\cdot) \in \Gamma(T^*M)$, solve the equation
\begin{equation} \label{eq:GRFex}
\dot{g}(t)=-2\,\left(\text{\normalfont Rc}_{g(t)} - \frac{1}{4}H(t) \upperset{g(t)}{\circ} H(t) - \frac{1}{2} d_{{}^{g(t)}}^*H(t) + \frac{1}{2} {\nabla_{g(t),H(t)}^+} \theta(t) \right)+\dot{B}(t),
\end{equation}
where $H(t)=H+dB(t)$.
\end{proposition}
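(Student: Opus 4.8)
The plan is to compute the two sides of the flow equation $\dot{\G}^+(t)=-2\,\text{Ric}^+_t$ separately and then match them, after transporting everything to a $2$-tensor on $TM$ via the anchor isomorphisms $\pi_\pm=\pi|_{E_\pm}$. The guiding principle is that a time-dependent $B$-field transformation $e^{B(t)}$ is a (time-dependent) isomorphism of Courant algebroids $E_H\to E_{H-dB(t)}$, under which the entire package — pairing, bracket, generalized metric, divergence, and hence every generalized curvature operator — transforms equivariantly. So I would work in the moving splitting $E_{H(t)}$, $H(t)=H+dB(t)$, where the curvature formula is simple, and only at the end reconcile with the fixed splitting $E_H$ where the flow of $\G$ is posed.

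For the right-hand side, I would conjugate by $e^{-B(t)}$ to pass to $E_{H(t)}$, in which $\G(t)$ takes the off-diagonal form $\G_0(t)=\begin{pmatrix}0 & g(t)^{-1}\\ g(t) & 0\end{pmatrix}$ while $\text{div}(t)=\text{div}_{g(t),z(t)}$ retains its shape. Formula \ref{Rc_gen} then applies directly, with $H$ replaced by $H(t)$, and yields $\text{Rc}^+_t$ as exactly the expression in parentheses in \ref{eq:GRFex}; equivariance under $e^{B(t)}$ guarantees this is the correct $\text{Rc}^+_t$ for $\G(t)$ on $E_H$ once transported back. Taking $\text{Ric}^+_t=\left<\cdot,\cdot\right>^{-1}\text{Rc}^+_t$ and noting that $\left<X+g(t)X,X+g(t)X\right>=g(t)(X,X)$ — so that $\left<\cdot,\cdot\right>$-raising on $E_+$ is $g(t)$-raising on $TM$ under $\pi_+$ — keeps the identifications clean.

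For the left-hand side, I would differentiate $\G(t)=e^{B(t)}\G_0(t)e^{-B(t)}$. Since the $B$-fields form an abelian subalgebra of $\mathfrak{so}(E)$, one has $\tfrac{d}{dt}e^{B(t)}=\dot B(t)\,e^{B(t)}$ with $\dot B(t)$ the skew endomorphism $\begin{pmatrix}0&0\\ \dot B(t)&0\end{pmatrix}$, which gives
\[
e^{-B(t)}\,\dot{\G}(t)\,e^{B(t)}=[\dot B(t),\G_0(t)]+\dot{\G}_0(t).
\]
One checks this anticommutes with $\G_0(t)$, as it must since $\G^2=\operatorname{Id}$, so it is its own $(\pm)$-exchanging part. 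Evaluating on $X-g(t)X\in E_-$ and reading off the $E_+$-component produces $Y+g(t)Y$ with $Y=g(t)^{-1}(\dot g(t)-\dot B(t))X$; hence, under $\pi_-$ and $\pi_+$, the operator $\dot{\G}^+(t)$ is identified with the $2$-tensor $\dot g(t)-\dot B(t)$ (its symmetric part coming from $\dot g(t)$, its skew part from $-\dot B(t)$). Matching the two sides then gives $\dot g(t)-\dot B(t)=-2\,\text{Rc}^+_t$, which is precisely \ref{eq:GRFex} after moving $\dot B(t)$ to the right.

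The step I expect to be the main obstacle is the bookkeeping of identifications: $\dot{\G}^+(t)$ lives in $\Gamma(E_-^*\otimes E_+)$, $\text{Rc}^+_t$ in $\Gamma(E_-^*\otimes E_+^*)$, and \ref{Rc_gen} outputs a $2$-tensor on $TM$, so all three must be transported consistently to $\Gamma(T^*M\otimes T^*M)$ using $\pi_\pm$ and $g(t)$, tracking the factors of $\tfrac12$ in the pairing and the raising in $\text{Ric}^+_t=\left<\cdot,\cdot\right>^{-1}\text{Rc}^+_t$. The second delicate point is justifying equivariance of $\text{Rc}^+$ under the \emph{time-dependent} gauge $e^{B(t)}$: it is exactly the failure of $e^{B(t)}$ to be constant in $t$ — encoded in the commutator $[\dot B(t),\G_0(t)]$ — that produces the $\dot B(t)$ term and reconciles the fixed splitting $E_H$ with the moving splitting $E_{H(t)}$ in which the curvature is computed.
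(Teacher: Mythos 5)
Your proposal is correct; note that the paper itself offers no proof of this proposition, simply quoting it from \cite[Example 5.4]{Gar19}, and your derivation is the standard one. The two computations that carry the argument both check out: $e^{-B(t)}\dot{\G}(t)e^{B(t)}=[\dot B(t),\G_0(t)]+\dot{\G}_0(t)$ sends $X-g(t)X$ to $Y+g(t)Y$ with $Y=g(t)^{-1}\bigl(\dot g(t)-\dot B(t)\bigr)X$, and the pairing restricted to $E_+$ is identified with $g(t)$ under $\pi_+$, so $\text{Ric}^+_t$ corresponds to $g(t)^{-1}\text{Rc}^+_t$; together these reduce $\dot{\G}^+=-2\,\text{Ric}^+_t$ to $\dot g-\dot B=-2\,\text{Rc}^+_t$, which is \ref{eq:GRFex}. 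The one delicate point you flag deserves a single explicit line: $e^{B(t)}$ preserves both the anchor and the pairing, hence $\text{div}_{g,z}\circ e^{B}=\text{div}_{g,e^{-B}z}$ and the one-form $\theta=2g(\pi z^+,\cdot)$ is unchanged under the change of splitting, which is exactly what legitimizes applying \ref{Rc_gen} in the moving splitting $E_{H(t)}$ at each fixed time (no time derivatives enter the curvature, so no extra terms arise there).
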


Separating the symmetric and skew-symmetric part of \ref{eq:GRFex} one gets (see \cite{ST13})
\[
\begin{dcases*}
\dot{g}(t)= -2\,\text{Rc}_{g(t)} + \frac{1}{2} H(t) \upperset{g(t)}{\circ} H(t) - \frac{1}{2} \mL_{g(t)^{-1}\theta(t)}g(t), \\
\dot{B}(t)= -d^*_{g(t)}H(t) + \frac{1}{2} d\theta(t) - \frac{1}{2} \iota_{g(t)^{-1}\theta(t)} H(t),
\end{dcases*}
\]
where one has that
\[
\frac{1}{2}\mL_{g(t)^{-1}\theta(t)}g(t)= S({\nabla_{g(t),H(t)}^+} \theta(t)), \qquad \frac{1}{2} d\theta(t) - \frac{1}{2} \iota_{g(t)^{-1}\theta(t)}H(t)= A({\nabla_{g(t),H(t)}^+} \theta(t))
\]
are respectively the symmetric and skew-symmetric parts of ${\nabla_{g(t),H(t)}^+} \theta(t)$.

The pair $(g(t),H(t))$ evolves as
\begin{equation} \label{eq:GRF_gH}
\begin{dcases*}
\dot{g}(t)= -2\,\text{Rc}_{g(t)} + \frac{1}{2} H(t) \upperset{g(t)}{\circ} H(t) - \frac{1}{2} \mL_{g(t)^{-1}\theta(t)}g(t), \\
\dot{H}(t)= -\Delta_{g(t)}H(t)-\frac{1}{2} \mL_{g(t)^{-1}\theta(t)} H(t),
\end{dcases*}
\end{equation}
where $\Delta_g=dd^*_g + d^*_g d$ denotes the Hodge Laplacian operator associated with $g$ and the fixed orientation. Notice how, up to scaling, the \emph{pluriclosed flow} introduced in \cite{ST10} is equivalent to a particular case of the generalized Ricci flow, as is proven in Propositions 6.3 and 6.4 in \cite{ST13}.
By \cite[Theorem 6.5]{ST13} a solution to \ref{eq:GRF_gH} can be pulled back to a solution of
\begin{equation}
\label{GRF_gauge}
\begin{dcases*}
\dot{g}(t)= -2\,\text{Rc}_{g(t)} + \frac{1}{2} H(t) \upperset{g(t)}{\circ} H(t), \\
\dot{H}(t)= -\Delta_{g(t)}H(t),
\end{dcases*}
\end{equation}
via the one-parameter family of diffeomorphism generated by $\frac{1}{4}g(t)^{-1}\theta(t)$.

\subsection{Simply connected nilpotent Lie groups}

We briefly recall the structure of simply connected nilpotent Lie groups, in the description of J. Lauret (see for example \cite{Lau11}).

Every simply connected nilpotent Lie group $G$ is diffeomorphic to its Lie algebra of left-invariant fields $\mathfrak{g}$ via the exponential map. Identifying $\mathfrak{g}$ with $\mathbb{R}^n$ via the choice of a basis, denote by $\mu \in \Lambda^2(\mathbb{R}^n)^* \otimes \mathbb{R}^n$ the induced Lie bracket. Now, exploiting the Campbell-Baker-Hausdorff formula,
\[
\text{exp}(X)\cdot \text{exp}(Y) = \text{exp}(X+Y+p_\mu(X,Y)),
\]
$X,Y \in \mathfrak{g} \cong \mathbb{R}^n$, where $p_\mu$ is a $\mathbb{R}^n$-valued polynomial in the variables $X,Y$, one can endow $\mathbb{R}^n$ with the operation $\cdot_\mu$,
\[
X \cdot_\mu Y = X + Y + p_\mu(X,Y),
\]
so that $\text{exp} \colon (\mathbb{R}^n,\cdot_\mu) \to G$ is an isomorphism of Lie groups. Therefore, the set of isomorphism classes of simply connected nilpotent Lie groups is parametrized by the set of nilpotent Lie brackets on $\R^n$: these form an algebraic subset of the vector space of skew-symmetric bilinear forms on $\R^n$,
\[
\mathcal{V}_n \coloneqq \Lambda^2 (\R^n)^* \otimes {\R^n},
\]
which parametrizes all skew-symmetric algebra structures on ${\R^n}$. Coordinates for $\mathcal{V}_n$ can be obtained by fixing a basis $\{e_i\}_{i=1}^n$ for ${\R^n}$: this allows to determine the so-called \emph{structure constants} of any fixed $\mu \in \mathcal{V}_n$ as the real numbers $\{\mu_{ij}^k,\,i,j,k=1 \dots n\}$ given by
\[
\mu(e_i,e_j)=\mu_{ij}^k e_k.
\]

One can then consider
\[
\mL_n\coloneqq\{\mu \in \mathcal{V}_n,\,\mu \text{ satisfies the Jacobi identity}\},
\]
the algebraic subset of $\mathcal{V}_n$ consisting of Lie brackets on ${\R^n}$, and
\[
\mathcal{N}_n \coloneqq \{\mu \in \mL_n,\,\mu \text{ is nilpotent}\},
\]
which parametrizes all nilpotent Lie algebra structures on ${\R^n}$.
By the previous remarks, $\mathcal{N}_n$ parametrizes all $n$-dimensional simply connected nilpotent Lie groups, up to isomorphism.

Let us consider the family of Riemannian metrics on $\R^n$
\begin{equation} \label{metrics}
\{ g_{\mu,q},\,\mu \in \mathcal{N}_n,\,q \text{ positive definite bilinear form on } \R^n \},
\end{equation}
where $g_{\mu,q}$ coincides with $q$ at the origin and is left-invariant with respect to the nilpotent Lie group operation $\cdot_\mu$. The set \ref{metrics} is actually the set of all Riemannian metrics on $\R^n$ which are invariant by some transitive action of a nilpotent Lie group. By \cite[Theorem 3]{Wil82}, the Riemannian manifolds $(\R^n,g_{\mu,q})$ (varying $n$, $\mu$ and $q$) are, up to isometry, all the possible examples of simply connected \emph{homogeneous nilmanifolds}, namely connected Riemannian manifolds admitting a transitive nilpotent Lie group of isometries.

The Riemannian metrics in \ref{metrics} are not all distinct, up to isometry: it was shown again in \cite[Theorem 3]{Wil82} that $g_{\mu,q}$ is isometric to $g_{\mu^\prime,q^\prime}$ if and only if there exists $h \in \text{GL}_n$ such that $\mu^\prime = h^*\mu$ and $q^\prime=h^*q$.
By convention we shall denote $g_\mu\coloneqq g_{\mu,\left<\cdot,\cdot\right>}$, where $\left<\cdot,\cdot\right>$ denotes the standard scalar product.

Since the Riemannian metrics $g_{\mu,q}$ are completely determined by their value at $0$ and by the Lie bracket $\mu$, so will be all curvature quantities related to $g_{\mu,q}$.
In particular, we are interested in Riemannian metrics $g_\mu$ and their Ricci tensor, which we shall encounter in two guises, which we denote by
\begin{align*}
&\rc_\mu\coloneqq \rc_{g_\mu}(0) \in S^2(\R^n)^* \subset (\R^n)^* \otimes (\R^n)^*,\\
&\ric_\mu\coloneqq \ric_{g_\mu}(0) \in (\R^n)^* \otimes \R^n = \mathfrak{gl}_n,
\end{align*}
with $\rc_\mu(X,Y)=\left<\ric_\mu(X),Y\right>$, $X,Y \in \R^n$.

For these, explicit formulas can be computed \cite{Lau01}. Let $\{e_i\}_{i=1}^n$ be the standard basis of $\R^n$, which, in particular, is orthonormal with respect to $\left<\cdot,\cdot\right>$: one has
\begin{equation} \label{riccinil_form}
\begin{aligned}
\rc_\mu(X,Y)=-\frac{1}{2} \left<\mu(X,e_k),e_l\right>\left<\mu(Y,e_k),e_l\right> + \frac{1}{4} \left<\mu(e_k,e_l),X\right>\left<\mu(e_k,e_l),Y\right>,
\end{aligned}
\end{equation}
so that, if $\rc_\mu=(\rc_\mu)_{ij}\,e^i \otimes e^j$ and $\ric_\mu = \left( \ric_\mu \right)_i^j e^i \otimes e_j$, one has
\begin{equation}\label{riccinil_coord}
(\rc_\mu)_{ij}=(\ric_\mu)_i^j=-\frac{1}{2} \mu_{ik}^l\mu_{jk}^l + \frac{1}{4} \mu_{kl}^i \mu_{kl}^j.
\end{equation}
Notice that one can use formulas \ref{riccinil_form} and \ref{riccinil_coord} to define $\rc_\mu \in S^2(\R^n)^*$ and $\ric_\mu \in \mathfrak{gl}_n$ for any $\mu \in \mathcal{V}_n$. 

\section{Generalized Ricci solitons} \label{sec:soliton}

Just as Ricci soliton metrics arise from self-similar solutions of the Ricci flow, generalized Ricci solitons arise from self-similar solutions of the generalized Ricci flow. We focus on exact Courant algebroids, defining a family of generalized Riemannian metrics, whose initial one is determined by a Riemannian metric on the base manifold; imposing that this family (together with a family of divergence operators) is a solution of the generalized Ricci flow, we draw necessary conditions on said Riemannian metric: these conditions generalize the Ricci soliton condition, leading to the definition of \emph{generalized Ricci solitons}.

Let $E$ be a Courant algebroid over an oriented smooth manifold $M$ with \v{S}evera class $[H_0] \in H^3(M)$. Fixing an isotropic splitting $E_{H_0}=\mathbb{T}M$, we consider a smooth self-similar pair of families $(\G(t),\text{div}(t))_{t \in I}$, $0 \in I$, on $E$ of the form
\begin{gather*}
\G(t)=e^{B(t)} \begin{pmatrix} 0 & (c(t)\varphi_t^*g_0)^{-1} \\ c(t)\varphi_t^*g_0 & 0  \end{pmatrix} e^{-B(t)}, \\
\text{{\normalfont div}}(t)=\text{{\normalfont div}}_{g(t),\theta(t)},
\end{gather*}
where $g_0 \in \Gamma(S^2_+(T^*M))$ is a Riemannian metric, $c:I \to \mathbb{R}$ is smooth and positive, $c(0)=1$, $(\varphi_t)$ is a one-parameter family of diffeomorphisms of $M$, $(B(t)) \subset \Gamma(\Lambda^2T^*M)$, $B(0)=0$, $(\theta(t)) \subset \Gamma(T^*M)$, $\theta(0)=\theta_0 \in \Gamma(T^*M)$ and $g(t)=c(t)\varphi_t^*g_0$.

By Proposition \ref{prop:GRFex}, such $(\G(t),\text{div}(t))_{t \in I}$ is a solution of the generalized Ricci flow if and only if
\begin{equation*}
\begin{dcases} \dot{c}(t)\varphi_t^*g_0+c(t)\varphi_t^*\mL_{Y_t}g_0=-2\,\text{\normalfont Rc}_{g(t)} + \dfrac{1}{2}H(t) \upperset{g(t)}{\circ} H(t) + \dfrac{1}{4} \mL_{g(t)^{-1}\theta(t)}g(t),\\
\dot{B}(t)=-d_{{}^{g(t)}}^*H(t) - \dfrac{1}{4} d\theta(t) + \frac{1}{4}\iota_{g(t)^{-1}\theta(t)}H(t),
\end{dcases}
\end{equation*}
where $H(t)=H_0 +dB(t)$ and $(Y_t)_{t \in I} \subset \Gamma(TM)$ is such that
\[
\frac{d}{dt}\varphi_t(x)=Y_t(\varphi_t(x)),
\]
for all $t \in I$, $x \in M$.

Setting $t=0$ and rearranging the terms,
\begin{equation} \label{genriccisol_split}
\begin{dcases} 
\text{Rc}_{g_0} = \lambda g_0 + \mL_X g_0 + \dfrac{1}{4} H_0 \upperset{g_0}{\circ} H_0 - \dfrac{1}{4} \mL_{g_0^{-1}\theta_0} g_0, \\
\omega= -d_{{}^{g_0}}^*H_0 + \frac{1}{2}d\theta_0 - \frac{1}{2} \iota_{g_0^{-1}\theta_0}H_0,
\end{dcases}
\end{equation}
where $-2\lambda = \dot{c}(0) \in \mathbb{R}$, $-2X=Y_0 \in \Gamma(TM)$, $\omega=\dot{B}(0) \in \Gamma(\Lambda^2T^*M)$. Summing together the two equations of \ref{genriccisol_split}, which involve symmetric and skew-symmetric tensor fields respectively, one has
\begin{equation} \label{genriccisol}
\text{Rc}_{g_0} = \lambda g_0 + \mL_X g_0 + \frac{1}{4} H_0 \upperset{g_0}{\circ} H_0 - \frac{1}{2} {\nabla_{g_0,H_0}^+} \theta_0 + \frac{1}{2}d_{{}^{g_0}}^*H_0 + \frac{1}{2} \omega,
\end{equation}
which is therefore equivalent to \ref{genriccisol_split}.
We can now introduce the following definition, which generalizes the notion of Ricci soliton.

\begin{definition}
A Riemannian metric $g_0$ on $M$ is called a \emph{generalized Ricci soliton} if there exist $\lambda \in \mathbb{R}$, $X \in \Gamma(TM)$, $H_0 \in \Gamma(\Lambda^3T^*M)$ closed, $\theta_0 \in \Gamma(T^*M)$, $\omega \in \Gamma(\Lambda^2T^*M)$ such that \ref{genriccisol}, or equivalently \ref{genriccisol_split}, holds.
\end{definition}

When working on a Lie group $G$, for simplicity one can assume all structures to be left-invariant, so that the generalized Ricci soliton condition reduces to an algebraic condition on structures on the Lie algebra of $G$, $(\mathfrak{g},\mu)$.

In the context of semi-algebraic Ricci solitons, it was proven in \cite[Theorem 1.5]{Jab15} that, if $g_0$ is a left-invariant Riemannian metric on $G$, the Lie derivative of $g_0$ with respect to a left-invariant vector field $X$ can be written as
\[
\mL_Xg_0=g_0(\tfrac{1}{2}(D+D^t))=g_0(\tfrac{1}{2}(D+D^t) \cdot, \cdot),
\]
for some $D=D_X \in \text{Der}(\mathfrak{g})$, where $\text{Der}(\mathfrak{g})$ denotes the algebra of derivations of $\mathfrak{g}$. It was then shown in \cite[Theorem 1]{Jab14} (generalizing the already known fact for the simply connected nilpotent case in \cite[Proposition 1.1]{Lau01}) that $D$ can be chosen to be symmetric with respect to $g_0$, so that one always has
\[
\mL_Xg_0=g_0(D)=g_0(D \cdot, \cdot),
\]
for some $D=D_X \in \text{Der}(\mathfrak{g}) \cap \text{Sym}(\mathfrak{g},g_0)$.
\ref{genriccisol} then becomes
\begin{equation} \label{genriccisol_lie}
\text{Rc}_{g_0} = \lambda g_0 + g_0(D) + \frac{1}{4} H_0 \upperset{g_0}{\circ} H_0 + \frac{1}{2}d_{{}^{g_0}}^*H_0 - \frac{1}{2} {\nabla_{g_0,H_0}^+} \theta_0 + \frac{1}{2} \omega \quad \in S^2 \mathfrak{g}^*,
\end{equation}
for $g_0 \in S^2_+\mathfrak{g}^*$, $\lambda \in \mathbb{R}$, $D \in \text{Der}(\mathfrak{g}) \cap \text{Sym}(\mathfrak{g},g_0)$, $H_0 \in \Lambda^3 \mathfrak{g}^*$ (with $d_\mu H_0=0$, $d_\mu \colon \Lambda^3 \mathfrak{g}^* \to \Lambda^4 \mathfrak{g}^*$ denoting the Chevalley-Eilenberg differential of the Lie algebra $(\mathfrak{g},\mu)$), $\theta_0 \in \mathfrak{g}^*$, $\omega \in \Lambda^2\mathfrak{g}^*$, or equivalently
\begin{equation} \label{genriccisol_lie_split}
\begin{dcases} 
\text{Rc}_{g_0} = \lambda g_0 + g_0(D) + \dfrac{1}{4} H_0 \upperset{g_0}{\circ} H_0 - \dfrac{1}{4} \mL_{g_0^{-1}\theta_0} g_0, \\
\omega= -d_{{}^{g_0}}^*H_0 + \frac{1}{2} d\theta_0 - \frac{1}{2} \iota_{g_0^{-1}\theta_0}H_0.
\end{dcases}
\end{equation}

Notice that $d_{{}^{g_0}}^*H_0$ is still a left-invariant form, since the Hodge star operator commutes with pull-backs via orientation-preserving isometries of $g_0$, such as left translations $L_g$, $g \in G$, by left-invariance of $g_0$.

\section{Generalized bracket flows} \label{sec:bracket}

Bracket flows have proven to be a powerful tool in the study of geometric flows on homogeneous spaces. 
This technique was first fully formalized by J. Lauret to study the Ricci flow on nilpotent Lie groups \cite{Lau11}. 
In particular, J. Lauret proved that the Ricci flow on an $n$-dimensional simply connected nilpotent Lie group $G$ starting from a left-invariant Riemannian metric $g_0$ is equivalent to an \textsc{ode} system defined on the variety of nilpotent Lie algebras $\mathcal{N}_n$,
\begin{equation} \label{Lauret_brflow}
	\begin{dcases}
		\dot{\mu}(t)=-\pi(\text{Ric}_{\mu(t)})\mu(t),\\
		\mu(0)=\mu_0,
	\end{dcases}
\end{equation}
where $\mu_0$ is the nilpotent Lie bracket associated with a fixed $g_0$-orthonormal left-invariant frame and $\pi \colon \mathfrak{gl}_n \to \mathfrak{gl}(\mathcal{V}_n)$, given by
\[
(\pi(\phi)\mu)(X,Y)=\phi\mu(X,Y)-\mu(\phi X,Y)-\mu(X, \phi Y), \quad \phi \in \mathfrak{gl}_n,\quad \mu \in \mathcal{V}_n,\quad X,Y \in \R^n,
\]
is the differential of the standard $\text{GL}_n$-action on $\mathcal{V}_n$:
\[
(A \cdot \mu) (X,Y)=A\mu(A^{-1}X,A^{-1}Y),\quad A \in \text{GL}_n,\quad \mu \in \mathcal{V}_n,\quad X,Y \in \R^n.
\]

More generally, in literature many other bracket flows have been considered (see for example \cite{Arr13,EFV15,Lau15,LR15,Lau16,Lau17,AL19}): these can be written in the form
\begin{equation} \label{brflow1}
	\begin{dcases}
		\dot{\mu}(t)=-\pi(\phi(\mu(t)))\mu(t),\\
		\mu(0)=\mu_0,
	\end{dcases}
\end{equation}
for some smooth function $\phi \colon \mathcal{V}_n \to \mathfrak{gl}_n$.
\subsection{Left-invariant Dorfman brackets}

Let $E$ be an exact Courant algebroid over a real Lie group $G$. We shall be interested in the case when $G$ is simply connected and nilpotent, so that we know that $G$ is isomorphic to $(\R^n,\cdot_\mu)$, for some Lie bracket $\mu \in \mathcal{N}_n$.

As we have recalled, there exists a unique cohomology class $[H] \in H^3(\R^n)$ such that, for any $H \in [H]$, $E$ is isomorphic to $E_H =T\R^n \oplus T^*\R^n$, endowed with the inner product $\left<\cdot,\cdot\right>$ in \ref{innerstd} and Dorfman bracket $[\cdot,\cdot]_H$ in \ref{bracketH}.

The whole structure descends to a structure on left-invariant sections, viewed as elements of $\R^n \oplus (\R^n)^*$, if and only if the $3$-form $H$ is left-invariant. 
Explicitly, when $X+\xi,Y+\eta \in \R^n \oplus (\R^n)^*$, the Dorfman bracket $[\cdot,\cdot]_H$ reduces to the operator
\begin{equation} \label{muH_form}
	\begin{aligned}
		{\Mu_H}(X+\xi,Y+\eta)=&\mu(X,Y)-\eta \circ \text{ad}_\mu(X) + \xi \circ \text{ad}_\mu(Y) + \iota_Y\iota_XH \\
		=&\mu(X,Y) - \eta \mu(X,\cdot) + \xi \mu(Y,\cdot) + H(X,Y,\cdot).
	\end{aligned}
\end{equation}
We call such a bilinear operator a (nilpotent) \emph{left-invariant Dorfman bracket}.

As one can check directly and also deduce from the axioms of Courant algebroids, a left-invariant Dorfman bracket is totally skew-symmetric, namely
\[
\left< \bm\mu_H(\cdot, \cdot), \cdot \right> \in \Lambda^3(\R^n \oplus (\R^n)^*)^*.
\]
By a little abuse, we can say $\bm\mu_H \in \Lambda^3(\R^n \oplus (\R^n)^*)^*$, by identifying $\Lambda^3(\R^n \oplus (\R^n)^*)^*$ with a subset of $\bm{\mathcal{V}}_n\coloneqq\Lambda^2(\R^n \oplus (\R^n)^*)^* \otimes (\R^n \oplus (\R^n)^*)$.

We shall denote the set of left-invariant Dorfman brackets on $\R^n$ by $\bm{\mathcal{C}}_n$. By definition, it is clear that
\begin{equation} \label{Cn_iso}
	\bm{\mathcal{C}}_n \xleftrightarrow{\text{1:1}} \{ (\mu,H) \in \mL_n \times \Lambda^3(\R^n)^*, \, d_\mu H=0 \}.
\end{equation}
Equivalently, a quick analysis using the axioms of Courant algebroids and the previous remarks shows that $\bm{\mathcal{C}}_n$ can be identified with the algebraic subset of $\bm{\mathcal{V}}_n$ consisting of all brackets $\bm\mu \in \bm{\mathcal{V}}_n$ such that
\begin{itemize}
\item $\bm\mu \in \Lambda^3\left( \rr \right)^*$,
\item $\bm\mu((\R^n)^*,(\R^n)^*)=0$,
\item $\bm\mu$ satisfies the Jacobi identity.
\end{itemize}

Given any $\bm\mu \in \bm{\mathcal{V}}_n$, one can define the \emph{structure constants}  with respect to the standard basis of $\R^n$ as the $(2n)^3=8n^3$ real numbers $\bm{\mu}_{\ul{\ol{i}}\,\ul{\ol{j}}\,\ul{\ol{k}}}$, $i,j,k=1\dots n$, given by
\begin{align*}
	&\bm\mu (e_i,e_j)=\bm\mu_{\ul{i}\ul{j}\ol{k}} e_k + \bm\mu_{\ul{i}\ul{j}\ul{k}} e^k, \qquad \bm\mu(e_i,e^j)=\bm\mu_{\ul{i}\ol{j}\ol{k}} e_k + \bm\mu_{\ul{i}\ol{j}\ul{k}} e^k, \\
	&\bm\mu (e^i,e_j)=\bm\mu_{\ol{i}\ul{j}\ol{k}} e_k + \bm\mu_{\ol{i}\ul{j}\ul{k}} e^k, \qquad \bm\mu(e^i,e^j)=\bm\mu_{\ol{i}\ol{j}\ol{k}} e_k + \bm\mu_{\ol{i}\ol{j}\ul{k}} e^k.
\end{align*}
Taking $\bm\mu_H \in \bm{\mathcal{C}}_n$, the structure constants are skew-symmetric in all three indices and vanish when two or more indices are overlined. The remaining structure constants are determined by $\mu$ and $H$. More precisely,
\[
(\Mu_H)_{\ul{i}\ul{j}\ol{k}} = \mu_{ij}^k, \quad (\Mu_H)_{\ul{i}\ul{j}\ul{k}}=H_{ijk}.
\]
The set of \emph{nilpotent} left-invariant Dorfman brackets on $\R^n$, denoted by $\bm{\mathcal{N}}_n$, is an algebraic subset of $\bm{\mathcal{V}}_n$ contained in $\bm{\mathcal{C}}_n$. It is easy to see that its elements are exactly those Dorfman brackets $\bm\mu_H$ for which $\mu \in \mathcal{N}_n$.

\subsection{Generalized bracket flows}
To introduce classical bracket flows, one uses the differential of the $\text{GL}_n$-action on $\mathcal{V}_n$. In the same spirit, one can consider the natural $\text{GL}(\rr)$ on $\bm{\mathcal{V}}_n$,
\[
(F \cdot \bm\mu) (z_1,z_2)=F\bm\mu(F^{-1}z_1,F^{-1}z_2),\quad F \in \text{GL}(\rr),\quad \bm\mu \in \bm{\mathcal{V}}_n,\quad z_1,z_2 \in \rr,
\]
which induces an action of $\text{GL}_n \cong \textbf{GL}_n \subset \text{SO}(\rr)$ on $\bm{\mathcal{V}_n}$, preserving both $\bm{\mathcal{C}}_n$ and $\bm{\mathcal{N}}_n$.

Now, identifying $\bm\mu \in \bm{\mathcal{C}}_n$ with $(\mu,H) \in \mathcal{V}_n \times \Lambda^3(\R^n)^*$, it is evident that this action distributes as
\[
A \cdot (\mu,H) = (A \cdot \mu, A \cdot H),
\]
where $A \in \text{GL}_n$ and $A \cdot H \coloneqq (A^{-1})^*H$.

We denote the differential of this action again by $\pi \colon \mathfrak{gl}_n \to \mathfrak{gl}(\bm{\mathcal{V}}_n)$: for $\bm\mu \in \bm{\mathcal{V}}_n$, $\phi \in \mathfrak{gl}_n$ one has
\[
\pi(\phi)\bm\mu = \frac{d}{ds}\Big|_{s=0} (e^{s\phi} \cdot \bm\mu) \in T_{\bm\mu} \bm{\mathcal{V}}_n \cong \bm{\mathcal{V}}_n.
\]
Since the curve $s \mapsto e^{s\phi} \cdot \bm\mu$ is contained in the orbit $\text{GL}_n \cdot \bm\mu$, in this interpretation one has
\begin{equation} \label{tang_gbf}
\pi(\phi)\bm\mu \in T_{\bm\mu} (\text{GL}_n \cdot \bm\mu).
\end{equation}

Following the ideas in the work of J. Lauret (see \cite{Lau11}), these remarks suggest the idea of defining a flow, which we shall refer to as \emph{generalized bracket flow}, on the vector space $\bm{\mathcal{V}}_n$, of the form
\begin{equation} \label{GBF1}
	\begin{cases} \dot{\Mu}(t)= -\pi\big(\phi(\bm\mu(t)\big)\bm\mu(t), \\
		\Mu(0)=\Mu_0,
	\end{cases}
\end{equation}
for some smooth function $\phi \colon \bm{\mathcal{V}}_n \to \mathfrak{gl}_n$ and some $\Mu_0 \in \bm{\mathcal{N}}_n$. By \ref{tang_gbf}, a solution $\Mu(t)$ to \ref{GBF1} satisfies $\dot{\Mu}(t) \in T_{\Mu(t)}(\text{GL}_n \cdot \Mu(t)) \subset T_{\Mu(t)}\bm{\mathcal{N}}_n$ for all $t$, so that the curve $\Mu(t)$ is entirely contained in $\bm{\mathcal{N}}_n$. For this reason, the function $\phi$ may also be defined on $\bm{\mathcal{N}}_n$ only.

The system \ref{GBF1} may be rewritten as the \textsc{ode} system on $\mathcal{N}_n \times \Lambda^3(\R^n)^*$
\begin{equation} \label{GBF2}
	\begin{cases} \dot{\mu}(t)=-\pi\big(\phi(\mu(t),H(t))\big)\mu(t),\\
		\dot{H}(t)=-\pi\big(\phi(\mu(t),H(t))\big)H(t), \\
		\mu(0)=\mu_0 \in \mathcal{N}_n, \\
		H(0)=H_0 \in \Lambda^3(\R^n)^*,\quad d_{\mu_0}H_0=0,
	\end{cases}
\end{equation}
where $\pi$ denotes the differential of the $\text{GL}_n$-action on $\mathcal{V}_n$ or $\Lambda^3(\R^n)^*$.

In what follows, we shall omit the time dependencies of the quantities involved.
Fixing the standard basis $\{e_i\}_{i=1}^n$ for $\R^n$, we shall denote by $\phi_i^j$, $i,j=1 \dots n$ the entries of the generic $\phi \in \text{GL}_n$ with respect to it, such that $\phi(e_i)=\phi_i^j e_j$ for all $i=1 \dots n$. One can then compute the coordinate expression for the evolution equations \ref{GBF2}, obtaining
\begin{align}
	\label{GBF_mu} \dot{\mu}_{ij}^k &= \phi_i^l \, \mu_{lj}^k + \phi_j^l \, \mu_{il}^k - \phi_l^k \, \mu_{ij}^l, \\
	\label{GBF_H} \dot{H}_{ijk} &= \phi_i^l \, H_{ljk} + \phi_j^l H_{ilk} + \phi_k^l \, H_{ijl},
\end{align}
for $i,j,k=1,\ldots,n$.

Special generalized bracket flows are obtained when the $\mathfrak{gl}_n$-valued smooth function $\phi$ only depends on $\mu$, $\phi=\phi(\mu)$: when this happens, the first equation of \ref{GBF2} is independent from the second one and corresponds to a usual bracket flow \ref{brflow1} on $\mathcal{N}_n$. 

Classical bracket flows have proved to be a powerful tool in the study of geometric flows on (nilpotent) Lie groups. We thus expect the generalized bracket flows we have defined to be useful in the context of geometric flows in generalized geometry.

\section{Examples on the Heisenberg group} \label{examples_heis}
In this section we perform explicit computations for the constructions introduced in the previous sections. We focus in particular on the Heisenberg group.

The Heisenberg group $H_3$ is a three-dimensional simply connected Lie group, which can be defined as  closed subgroup of $\text{GL}_3$:
\[
H_3=\left\{ \begin{pmatrix} 1&a&c \\ 0&1&b \\ 0&0&1 \end{pmatrix} \in \text{GL}_3,\, a,b,c \in \R \right\}.
\]
Via the exponential map, $H_3$ is diffeomorphic to its Lie algebra
\[
\mathfrak{h}_3=\left\{ \begin{pmatrix} 0 & a & c \\ 0 & 0 & b \\ 0 & 0 & 0 \end{pmatrix} \in \mathfrak{gl}_{3},\, a,b,c \in \R \right\}.
\]
Fixing the basis
\begin{equation} \label{basis}
e_1=\begin{pmatrix} 0&1&0 \\ 0&0&0 \\ 0&0&0 \end{pmatrix},\quad e_2=\begin{pmatrix} 0&0&0 \\ 0&0&1 \\ 0&0&0 \end{pmatrix}, \quad e_3=\begin{pmatrix} 0&0&1 \\ 0&0&0 \\ 0&0&0 \end{pmatrix}
\end{equation}
for $\mathfrak{h}_3$, the induced bracket $\mu \in \mathcal{N}_3$ is $\mu=e^1 \wedge e^2 \otimes e_3$, since $[e_1,e_2]=e_3$, $[e_1,e_3]=[e_2,e_3]=0$.
\subsection{Generalized Ricci solitons on the Heisenberg group}
\label{soliton_Heis}

Let $H_3$ be the Heisenberg group, and fix the basis \ref{basis} for its Lie algebra $\mathfrak{h}_3$.

In order to find generalized Ricci solitons on $H_3$, we first notice that the codifferential $d^*_{g_0}$ is the null map for every $g_0 \in S^2_+\mathfrak{h}_3^*$, since $\upperset{g_0}{*}$ sends $\Lambda^3 \mathfrak{h}_3^*$ to $\R$ and $d \colon \R \to \mathfrak{h}_3^*$ is the null map. With respect to the basis $\{e_1,e_2,e_3\}$ in \ref{basis}, the generic derivation $D$ of $\mathfrak{h}_3$ can be written in matrix form as
\[
D=\begin{pmatrix} a_1 & a_2 & 0 \\
                  a_3 & a_4 & 0 \\
                  a_5 & a_6 & a_1+a_4
  \end{pmatrix},
\]
with $a_i \in \R$, $i=1 \dots 6$.

Let $g_0$ be the standard metric
\[
g_0=e^1 \otimes e^1+e^2 \otimes e^2+e^3 \otimes e^3,
\]
such that $\{e_1,e_2,e_3\}$ is an orthonormal basis. Now, symmetric derivations with respect to $g_0$ are simply represented by symmetric matrices with respect to this basis:
\begin{equation} \label{deriv}
D=\begin{pmatrix} a_1 & a_2 & 0 \\ a_2 & a_3 & 0 \\ 0 & 0 & a_1+a_3 \end{pmatrix},
\end{equation}
$a_i \in \R$, $i=1,2,3$.
In what follows, assume
\[
H_0=a \, e^{123} = \frac{a}{6} \, \varepsilon_{ijk} \, e^{ijk}, \qquad
\theta_0= \theta_i \, e^i, \qquad
\omega= \frac{1}{2} \, \omega_{ij} \, e^{ij},
\]
where $a, \theta_i, \omega_{ij} \in \R$, $\omega_{ij}=-\omega_{ji}$, $i,j=1,2,3$, $e^{i_1 \dots i_k} \coloneqq e^{i_1} \wedge \dots \wedge e^{i_k}$ and $\varepsilon_{ijk}$ is equal to the sign of the permutation sending $(1,2,3)$ into $(i,j,k)$ whenever $i$, $j$ and $k$ are all different, and equal to $0$ otherwise, by definition.

We are now ready to compute the coordinate expression for all the terms involved in \ref{genriccisol_lie}:
\begin{itemize}
\item $\text{Rc}_{g_0}$: from \ref{riccinil_coord}, since the basis $\{e_1,e_2,e_3\}$ is orthonormal, by a direct computation we get
\[
\text{Rc}_{g_0}=\begin{pmatrix} -\frac{1}{2} & 0 & 0 \\ 0 & -\frac{1}{2} & 0 \\ 0 & 0 & \frac{1}{2} \end{pmatrix},
\]
in the fixed basis,
\item $g_0(D)$: it is simply represented by the matrix \ref{deriv} with respect to the orthonormal basis,
\item $H_0 \upperset{g_0}{\circ} H_0$: one has $H_0 \upperset{g_0}{\circ} H_0(e_i,e_j)= g(\iota_{e_i} H_0, \iota_{e_j} H_0)=g_0^{rl}g_0^{st}(H_0)_{irs}(H_0)_{jlt}=a^2 \, \varepsilon_{ist}\, \varepsilon_{jst}$, so that, in matrix form, we get
\[
H_0 \upperset{g_0}{\circ} H_0=\begin{pmatrix} 2a^2 & 0 & 0 \\ 0 & 2a^2 & 0 \\ 0 & 0 & 2a^2 \end{pmatrix},
\]
\item ${\nabla_{{g_0},H_0}^+} \theta_0$: writing $\nabla^+$ instead of $\nabla_{g_0,H_0}^+$ and by left-invariance of the quantities involved, one has
\[
{\nabla^+} \theta_0(e_i,e_j)=-\theta_0(\nabla^+_{e_i}{e_j}).
\]
Now, $\nabla^+=\nabla^{g_0} + \frac{1}{2} g_0^{-1}H_0$ and, letting $\nabla^{g_0}_{e_i}e_j=\Gamma_{ij}^k e_k$ and recalling the Koszul formula one computes
\[
\Gamma_{ij}^k=-\frac{1}{2} \left( {\mu_{jk}^i}+{\mu_{ik}^j} + {\mu_{ji}^k} \right),\quad \frac{1}{2}g_0^{-1}H_0(e_i,e_j)=\frac{1}{2}a \, \varepsilon_{ijk} \, e_k,
\]
so that
\[
{\nabla^+} \theta_0(e_i,e_j)=\frac{1}{2} \theta_k \left({\mu_{jk}^i}+{\mu_{ik}^j} + {\mu_{ji}^k} - a \, \varepsilon_{ijk} \right).
\]
The corresponding matrix with respect to the orthonormal basis is thus
\[
{\nabla^+} \theta_0=\begin{pmatrix}
0 & -\frac{1}{2}\theta_3(1+a) & \frac{1}{2} \theta_2 \left( 1 + a \right) \\
\frac{1}{2} \theta_3 (1+a) & 0 & -\frac{1}{2}\theta_1 \left( 1 +a \right) \\
\frac{1}{2} \theta_2 \left(1-a \right) & -\frac{1}{2} \theta_1 \left(1-a \right) & 0
\end{pmatrix}.
\]
so that its symmetric and skew-symmetric parts are
\begin{align*}
S({\nabla^+} \theta_0)=& \begin{pmatrix} 0 & 0 & \frac{1}{2} \theta_2 \\
0 & 0 & -\frac{1}{2} \theta_1 \\ \frac{1}{2} \theta_2 & -\frac{1}{2} \theta_1 & 0 \end{pmatrix}, 
\\
A({\nabla^+} \theta_0)=& \begin{pmatrix}
0 & -\frac{1}{2} \theta_3(1+a) & \frac{1}{2} a \,\theta_2 \\
\frac{1}{2} \theta_3(1+a) & 0 & -\frac{1}{2} a \, \theta_1 \\
-\frac{1}{2} a\, \theta_2 & \frac{1}{2} a\, \theta_1 & 0
\end{pmatrix}.
\end{align*}
\end{itemize}

The first equation of \ref{genriccisol_lie_split} gives now rise to a system of six equations in the unknowns $\lambda, a_1 , a_2 , a_3,\theta_1 , \theta_2 , \theta_3$:
\[
\begin{dcases*}
\frac{1}{2} + \lambda + a_1 + \frac{1}{2}a^2=0,\\
a_2=0,\\
\theta_2=0,\\
\frac{1}{2} + \lambda + a_3 + \frac{1}{2}a^2=0,\\
\theta_1=0,\\
-\frac{1}{2} + \lambda + a_1+a_3+\frac{1}{2} a^2=0,
\end{dcases*}
\]
which is equivalent to
\[
\begin{dcases*}
\lambda=-\frac{1}{2}(3+a^2),\\
a_1=a_3=1,\\
a_2=\theta_1=\theta_2=0.
\end{dcases*}
\]
The second equation of \ref{genriccisol_lie_split} now implies
\[
\omega_{12}=-\omega_{21}=-\tfrac{1}{2}\theta_3(1+a),
\]
while all the other $\omega_{ij}$'s vanish.

We thus obtain generalized Ricci solitons with the data
\[
\begin{dcases*}
g_0=e^1 \otimes e^1 + e^2 \otimes e^2 + e^3 \otimes e^3,\\
\lambda= -\frac{1}{2}(3+a^2),\\
D=e^1 \otimes e_1 + e^2 \otimes e_2 + 2 \, e^3 \otimes e_3,\\
H_0= a\, e^{123},\\
\theta_0 = \theta_3\,e^3,\\
\omega= -\tfrac{1}{2}\theta_3(1+a) e^{12}.
\end{dcases*}
\]

\begin{remark}
The metric $g_0$ above is actually also a Ricci soliton in the classical sense, since, setting $a=\theta_3=0$, $H_0$, $\theta_0$ and $\omega$ vanish, leaving $g_0$ satisfying $\text{Rc}_{g_0}= \lambda g_0 + g_0(D)$, or equivalently, applying $g_0^{-1}$, $\text{Ric}_{g_0}= \lambda \operatorname{Id} + D$ for $\lambda=-\frac{3}{2}$ and $D$ as above. By \cite[Theorem 3.5]{Lau01}, $g_0$ is the only left-invariant Ricci soliton on $H_3$, up to isometry and rescaling.
\end{remark}

\subsection{A generalized bracket flow on the Heisenberg group}
The definition of the gauge-corrected generalized Ricci flow \ref{GRF_gauge} suggest the generalized bracket flow
\begin{equation} \label{genbrflow_ricH}
\begin{dcases}
	\dot{\bm\mu}(t)=-\pi\left(\text{Ric}_{\mu(t)} - \tfrac{1}{4} H(t)^2 \right)\bm\mu(t), \\
	\bm\mu(0)=\bm\mu_0 \in \bm{\mathcal{N}}_n.
\end{dcases}
\end{equation}
Here, for every $H \in \Lambda^3(\R^n)^*$, we denote $H^2 \coloneqq \left<\cdot,\cdot\right>^{-1} (H \circ H)$, where $\circ$ is meant with respect to $\left< \cdot, \cdot \right>$. Recalling \ref{riccinil_coord}, the whole endomorphism $\phi(\bm\mu)=\phi(\mu,H)=\text{Ric}_{\mu} - \tfrac{1}{4} H^2$ can then be written in coordinates as
\[
\phi_i^j=-\frac{1}{2} \mu_{ik}^l \mu_{jk}^l + \frac{1}{4} \mu_{kl}^i \mu_{kl}^j - \frac{1}{4} H_{ikl} H_{jkl},
\]
with respect to the standard basis of $\R^n$

Now, let $n=3$ and let $\mu_0$ be the Heisenberg Lie bracket $\mu_0=e^{1} \wedge e^{2} \otimes e_3$. Let $H_0$ be the generic (trivially $d_{\mu_0}$-closed) $3$-form $H_0=c\,e^{123}$, $c \in \R$. Then, using \ref{riccinil_coord}, \ref{GBF_mu} and \ref{GBF_H}, it is easy to see to compute that the solution to \ref{genbrflow_ricH} is of the form $\bm\mu(t)=\left(x(t)\mu_0,y(t)e^{123}\right)$, with $x(t)$ and $y(t)$ satisfying the \textsc{ode} system
\begin{equation} \label{genbrflow_ex}
\begin{cases}
	\dot{x}=-\frac{3}{2} x^3 - \frac{1}{2} xy^2, \\
	\dot{y}=-\frac{3}{2} y^3 - \frac{1}{2} x^2y, \\
	x(0)=1, \, y(0)=a.
\end{cases}
\end{equation}
It is easy to see that the solution $(x(t),y(t))$ is defined for all positive times and converges to $(0,0)$, since
\[
\frac{d}{dt} \left(x(t)^2 + y(t)^2\right)= 2 (x(t)\dot{x}(t) + y(t)\dot{y}(t)) \leq -\left(x(t)^2+y(t)^2\right)^2,
\]
so that, by comparison, we get
\[
x(t)^2 + y(t)^2 \leq \frac{1+a^2}{1+(1+a^2)t},
\]
for all $t \geq 0$.
For $a=1$, the explicit solution to \ref{genbrflow_ex} is given by
\[
x(t)=y(t)=(1+4t)^{-\frac{1}{2}}.
\]
defined for $t \in (-\tfrac{1}{4},\infty)$.

\section{Generalized Ricci flow on the Heisenberg group} \label{genricciflow_heis}
Let us consider the gauge-corrected generalized Ricci flow \ref{GRF_gauge} on the three-dimensional Heisenberg group $H_3$, with initial data
\[
g_0=e^1 \otimes e^1 + e^2 \otimes e^2 + e^3 \otimes e^3,\quad H_0=a\,e^{123},\,a \in \R.
\]
Denoting by $(g(t),H(t))$ the solution at time $t$, we adopt the ansatz
\[
g(t)=g_1(t)\,e^1 \otimes e^1 + g_2(t)\,e^2 \otimes e^2 + g_3(t)\,e^3 \otimes e^3,\quad g_i(0)=1,\,i=1,2,3,
\]
while $H(t)=H_0$ is necessarily constant since $d^*_{g}$ and $\Delta_{g}$ are null maps for every left-invariant Riemannian metric $g$, as remarked in Subsection \ref{soliton_Heis}.

An explicit computation yields
\[
	\text{Rc}_{g(t)}=\begin{pmatrix} -\frac{1}{2} \frac{g_3}{g_2} &0&0 \\ 0& -\frac{1}{2} \frac{g_3}{g_1} &0 \\ 0&0& \frac{1}{2} \frac{g_3^2}{g_1g_2} \end{pmatrix},\quad H_0 \circ H_0=\begin{pmatrix} \frac{2a^2}{g_2g_3} &0&0 \\ 0&  \frac{2a^2}{g_1g_3} &0 \\ 0&0& \frac{2a^2}{g_1g_2} \end{pmatrix},
\]
so that \ref{GRF_gauge} reduces to the \textsc{ode} system
\[
\begin{dcases*}
\dot{g}_1=\frac{g_3}{g_2} + \frac{a^2}{g_2g_3},\\
\dot{g}_2=\frac{g_3}{g_1} + \frac{a^2}{g_1g_3},\\
\dot{g}_3=-\frac{g_3^2}{g_1g_2} + \frac{a^2}{g_1g_2},\\
g_i(0)=1,\quad i=1,2,3.
\end{dcases*}
\]
By uniqueness, we thus have $g_1(t)=g_2(t)$ for all $t$ and we obtain
\begin{equation} \label{GRF_H}
\begin{dcases}
	\dot{g}_1=\frac{a^2+g_3^2}{g_1g_3},\\
	\dot{g}_3=\frac{a^2-g_3^2}{g_1^2},\\
	g_1(0)=g_3(0)=1.
\end{dcases}
\end{equation}
Special cases are given by
\begin{itemize}
\item $a=0$: the generalized Ricci flow reduces to the classical Ricci flow and an explicit solution to \ref{GRF_H} is given by
\[
g_1(t)=(1+3t)^{\frac{1}{3}},\quad g_3(t)=(1+3t)^{-\frac{1}{3}},
\]
defined on the maximal definition interval $I=(-\frac{1}{3},\infty)$ (cf. \cite{IJ92}),
\item $a= \pm 1$: the system reduces to 
\[
\begin{dcases*}
\dot{g}_1=\frac{2}{g_1},\\
\dot{g}_3=0,	\\
g_1(0)=g_3(0)=1,
\end{dcases*}	
\]
with solution
\[
g_1(t)=(1+4t)^{\frac{1}{2}},\quad g_3(t)=1,
\]
for $t \in I=(-\frac{1}{4},\infty)$.
\end{itemize}
A quick qualitative analysis of \ref{GRF_H} shows that, for all $a \in \R$, the solution to \ref{GRF_H} exists for all positive times, with
\[
\lim_{t \to \infty} g_1(t)= \infty, \quad \lim_{t \to \infty} g_3(t)= \lvert a \rvert.
\]
The maximal definition interval is always of the form $I_a=(T_{\text{min}}(a),\infty)$, where $T_{\text{min}} \colon \R \to \R_{<0}$ is an even function, with $T_{\text{min}}(0)=-\frac{1}{3}$ and monotonically converging to $0$ as $a$ goes to infinity (see Figure \ref{Tmin}). We also have
\[
\lim_{t \to T_{\text{min}}(a)^+} g_1(t)=0,\quad \lim_{t \to T_{\text{min}}(a)^+} g_3(t)= 
\begin{cases}
	\infty&\quad \lvert a \rvert < 1,\\
	1&\quad a=\pm 1, \\
	0&\quad \lvert a \rvert > 1.
\end{cases}	
\]

In Figure \ref{fig_sol}, we show some solutions of \ref{GRF_H}, sampled for $a=\frac{k}{4}$, $k=0,\ldots,9$, and viewed as curves in the phase plane $(g_1,g_3)$. The red and blue curves correspond to $a=0$ and $a=1$, respectively.

\begin{figure}
\centering
\includegraphics[width=0.8\textwidth]{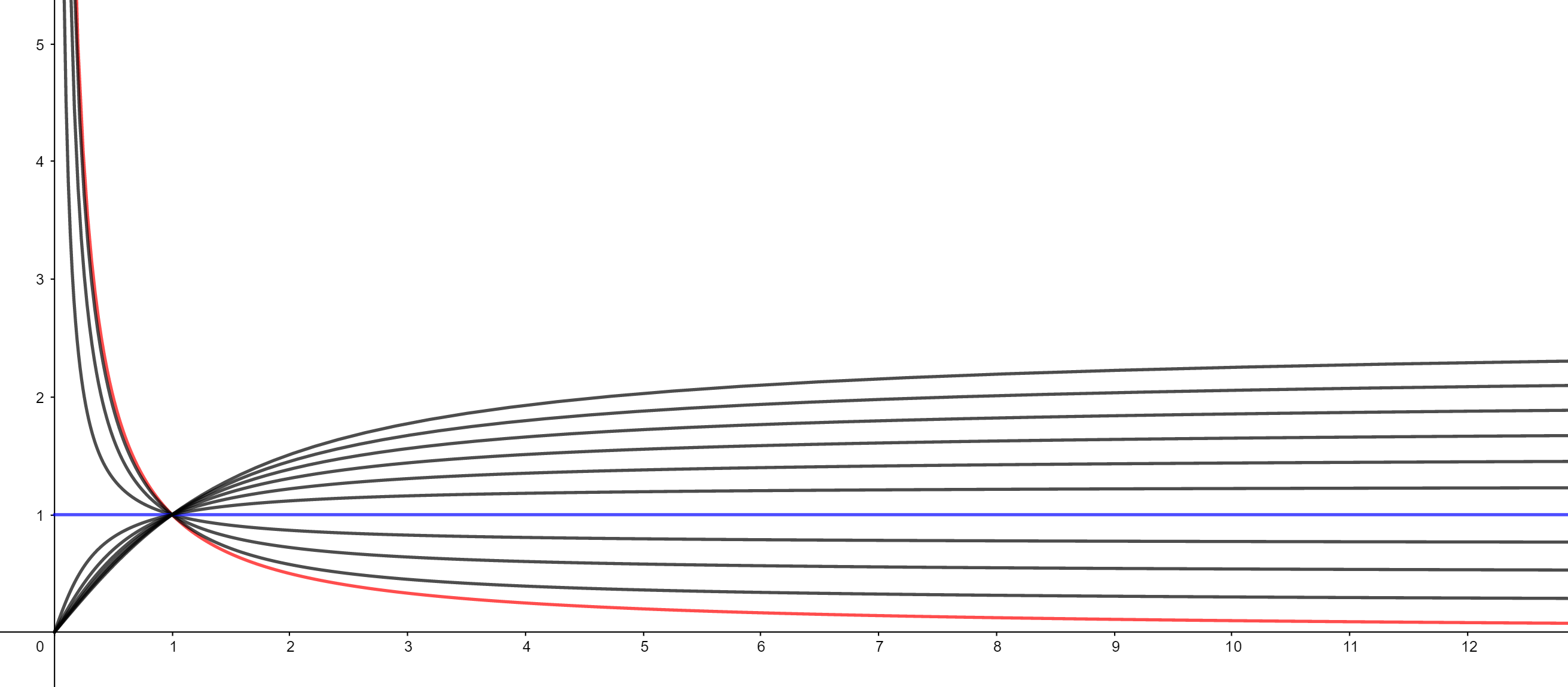}
\caption{Examples of solutions to \ref{GRF_H} viewed in the phase plane $(g_1,g_3)$.}
\label{fig_sol}
\end{figure}


\begin{figure}
	\centering
	\includegraphics[width=0.8\textwidth]{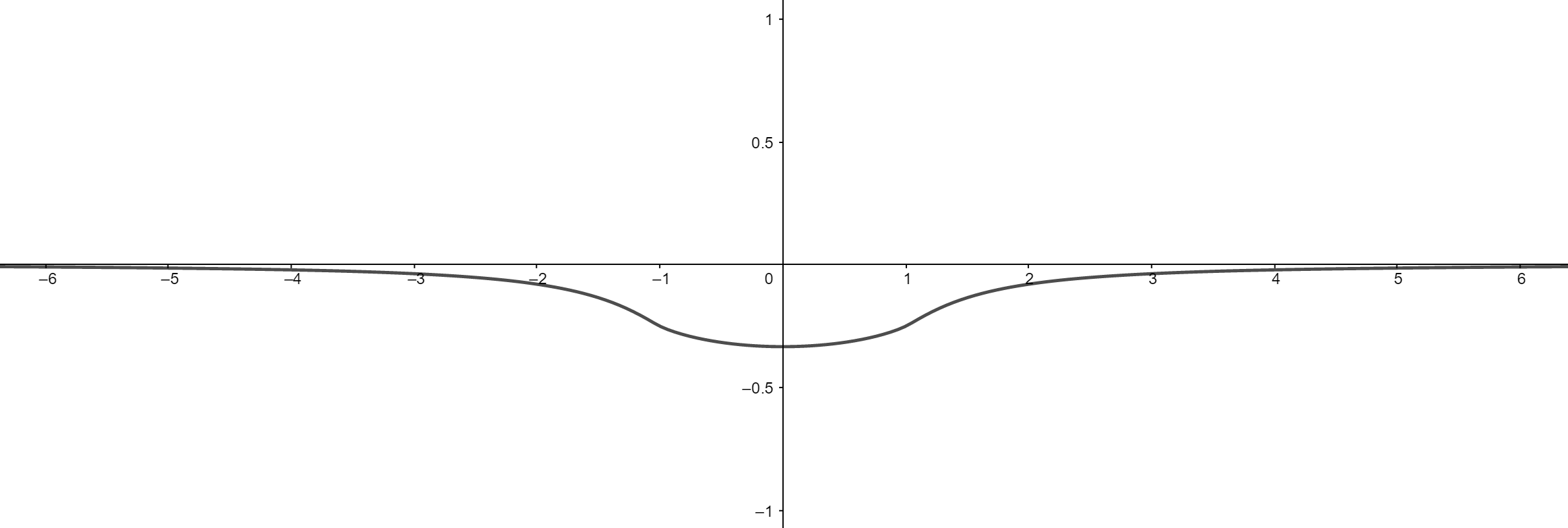}
	\caption{Behavior of the map $T_{\text{min}}$.}
	\label{Tmin}
\end{figure}
 
\medskip
\begin{footnotesize}

\medskip
\end{footnotesize}
\end{document}